\newtheorem{theorem}{Theorem}[section]
\newtheorem{lemma}[theorem]{Lemma}
\newtheorem{definition}[theorem]{Definition}
\newtheorem{remark}[theorem]{Remark}
\def\ssag{Schur $\sigma$-ancestor group\ }
\def\ssags{Schur $\sigma$-ancestor groups\ }
\def\ssagsnsp{Schur $\sigma$-ancestor groups}
\def\spsags{Schur$+1$ $\sigma$-ancestor groups\ }
\def\pssag{pseudo-Schur $\sigma$-ancestor group\ }
\def\pssagnsp{pseudo-Schur $\sigma$-ancestor group}
\def\ssg{Schur $\sigma$-group\ }
\def\ssgnsp{Schur $\sigma$-group}
\def\ssgs{Schur $\sigma$-groups\ }
\def\spsg{Schur$+1$ $\sigma$-group\ }
\def\spsgs{Schur$+1$ $\sigma$-groups\ }
\begin{document}
\title{On certain quotients of $p$-class tower groups of quadratic fields}
\author{Michael R. Bush}
\address{Dept. of Mathematics,
  Washington and Lee University, Lexington, VA 24450, USA.}
\email{bushm@wlu.edu}
\subjclass{11R32 (Primary); 11R11, 11R29, 11R37, 20D15, 20E18 (Secondary).}
\keywords{quadratic field, $p$-class group, $p$-class tower group, Hilbert class field, maximal unramified extension, \ssgnsp, Cohen-Lenstra heuristics, lower $p$-central series.}
\date{August 2023}
\begin{abstract}
A characterization of the quotients of $p$-class tower groups of quadratic fields by terms in the lower $p$-central series plays an important role in the formulation of conjectures by Boston, Hajir and the author about the distribution of such groups as the base field varies.
In this paper, another equivalent criterion is given which 
resolves an issue that arose as part of the group-theoretical calculations carried out in relation to these conjectures.
\end{abstract}

\maketitle

\section{Introduction}
In this paper, we consider certain finite quotients of a type of pro-$p$ group that arises naturally in algebraic number theory. The definitions and main statements in this paper do not require any knowledge of number theory, but we briefly recall some facts and historical developments as motivation.

The class group  $Cl(K)$ of a number field $K$ is an important invariant that indicates whether or not the ring of integers $\mathcal{O}_K$ is a unique factorization domain (UFD). In particular, $Cl(K)$ is a finite abelian group which is trivial exactly when $\mathcal{O}_K$ is a UFD. 
 In the early part of the twentieth century, during the development of class field theory, it was shown that $Cl(K) \cong \mathrm{Gal}(H/K)$ where $H$ is the maximal unramfied abelian extension of $K$, also called the Hilbert class field of $K$. 
During the same period, the study of more general non-abelian extensions with restricted ramification was motivated by various considerations including the question of whether or not a ring $\mathcal{O}_K$ that is not a UFD can always be embedded in the ring of integers $\mathcal{O}_F$ of a larger number field $F$ where $\mathcal{O}_F$ is a UFD. In 1964, this problem was finally shown to have a negative answer by Golod and Shafarevich~\cite{GS} when they established a necessary criterion for a pro-$p$ group to be finite in the form of an inequality relating the generator and relation ranks of the group. This criterion could be shown to fail for the Galois groups attached to certain extensions related to this problem, thus demonstrating that the corresponding extensions were infinite and so yielding examples where no embedding was possible.  We refer the reader to~\cite{R} for a more detailed discussion of this problem and its resolution.

Given their importance, it is natural to try to understand the distribution of $Cl(K)$ as $K$ varies. In the 1980s, Cohen and Lenstra formulated conjectural heuristics~\cite{CL}  regarding the distribution of class groups for various families of fields.  Numerous extensions and generalizations to other contexts have since been proposed. 
One type of generalization is based on the observation that class groups are isomorphic to certain abelian Galois groups as discussed in the previous paragraph. In~\cite{BBH1,BBH2}, non-abelian generalizations of the conjectures for imaginary and real quadratic fields have been formulated by dropping the abelian condition and considering the Galois group of  the maximal unramified $p$-extension of the base field.  These extensions are usually non-abelian and often infinite. The Galois group of such an extension equipped with the Krull topology is a pro-$p$ group which must satisfy several conditions discussed in more detail below. This is the backdrop for the results discussed in this paper, but we note that these conjectures have since been extended beyond the pro-$p$ setting and to other families of fields by Liu, Wood and Zureick-Brown~\cite{LWZ}.

We now recall some definitions and fix notation and terminology.
Let $G$ be a pro-$p$ group and let $G^{ab} = G/[G,G]$ denote the {\em abelianization of $G$}. If we have a surjective homomorphism from $F$ to $G$ where $F$ is a finitely generated free pro-$p$ group and the kernel is finitely generated as a normal subgroup of $F$, then we say that $G$ is {\em finitely presented}. If we consider $\mathbb{F}_p$ as a module with trivial action by $G$, then $g = \dim H^1(G,\mathbb{F}_p)$ and $r = \dim H^2(G,\mathbb{F}_p)$ are the (minimal) {\em generator} and {\em relation ranks for $G$} respectively. See~\cite{K,NSW} for definitions and more background regarding pro-$p$ groups and their cohomology.

The following definition first appears in work of Koch and Venkov~\cite{KV}.
\begin{definition}
Let $G$ be a pro-$p$ group with generator rank $g$ and relation rank $r$. We say that $G$ is a \ssg if the following conditions hold:
\begin{itemize}
\item[(i)] $G^{ab}$ is a finite abelian group.
\item[(ii)] $g = r$.
\item[(iii)] There exists an automorphism $\sigma: G \rightarrow G$ of order $2$ such that the induced automorphism 
$\sigma: G^{ab} \rightarrow G^{ab}$ is the inversion map $\overline{x} \mapsto \overline{x}^{-1}$.
\end{itemize}
\end{definition}

Koch and Venkov observed that if $K$ is an imaginary quadratic field and $G = G_K = \mathrm{Gal}(K^{nr,p}/K)$ where $K^{nr,p}$ is the maximal unramified $p$-extension of $K$, then $G$ is a \ssg and they used this extra structure to strengthen the conclusions of the Golod-Shafarevich criterion in this situation. In particular, they showed that if $p$ is an odd prime and $G$ has generator rank $g \geq 3$, then $G$ must be infinite. A key intermediate step in their work is to show that these groups have presentations of a special form. 

First, one can show using~(iii) that there exists a generating set for $G$ whose elements are inverted by~$\sigma$. Equivalently, if $F$ is free on $x_1, \ldots, x_g$ and we equip $F$ with its own $\sigma$-automorphism defined by $\sigma(x_i) = x_i^{-1}$ for all $i$, then there exists a $\sigma$-equivariant homomorphism from $F$ to $G$. The kernel $R$ of this homomorphism is then not only normal, but invariant under $\sigma$. 
Cohomological arguments imply that the generator and relation ranks are equal, thus there exist relators $r_1, \ldots, r_g$ such that $R = \langle r_1, \ldots, r_g \rangle^F$ where the latter denotes the closed normal subgroup of $F$ generated by these relators. Further arguments then show that the relators can actually be chosen from the set
\[ X = X(F) = \{ r \in \Phi(F) \mid \sigma(r) = r^{-1} \}. \]
Here $\Phi(F)$ is the Frattini subgroup of $F$ and we assume that the generator rank of $F$ is equal to the generator rank of $G$ so $R \subseteq \Phi(F)$.

The existence of such presentations, where the relators lie in $X$, is the starting point for the work in~\cite{BBH1}. Roughly speaking, the model developed there links the probability with which a particular \ssg $G$ arises as a Galois group $G_K$ with the probability that $G$ arises when a presentation of this special type is selected at random. More precisely, we develop a model for how often certain finite quotients of $G$ arise in terms picking random presentations for these quotients and then a limiting process is used to define the probability that $G$ arises. Some important details regarding the latter step for infinite groups $G$ did not appear in~\cite{BBH1} and can be found in~\cite[Appendix 1]{BBH2}.

The quotients of interest here are by terms in the following series of characteristic subgroups.
\begin{definition}
Let $G$ be a pro-$p$ group. The  {\em lower $p$-central series of $G$} is the series of closed subgroups of $G$ defined recursively by $P_0(G) = G$ and 
$P_n(G) = P_{n-1}(G)^p [G,P_{n-1}(G)]$ for $n \geq 1$. Here $P_{n-1}(G)^p$ denotes the closed subgroup generated by $p$th powers of elements in $P_{n-1}(G)$ and $[G,P_{n-1}(G)]$ denotes the closed subgroup generated by all commutators of pairs of elements from $G$ and $P_{n-1}(G)$. Note that $P_1(G) = \Phi(G)$. If $P_c(G) \neq 1$ and $P_{c+1}(G) = 1$, then we say $G$ has $p$-class $c$. We let $G_c = G/P_c(G)$ and refer to this as the maximal $p$-class $c$ quotient of $G$.
\end{definition}

\begin{remark}
If $G$ is a finitely generated pro-$p$ group, then an inductive argument shows that each subgroup in the lower $p$-central series is of finite index and finitely generated. It follows that the maximal $p$-class $c$ quotients $G_c$ are finite $p$-groups in this case. Since every finite quotient of $G$ has finite $p$-class, one can then recover $G$ as the inverse limit $\displaystyle G = \varprojlim G_c$ as  $c \rightarrow \infty$. If $G$ happens to be a finite $p$-group, then it will have finite $p$-class itself and $G_c \cong G$ for all  sufficiently large $c$. In particular, the $p$-class of $G_c$ can be strictly smaller than $c$ in this situation.
\end{remark}

If we apply the construction above to the free group $F$, we obtain a series of increasingly large finite quotients $F_c$. Since the subgroups in the lower $p$-central series are characteristic, the automorphism $\sigma:F \rightarrow F$ defined earlier restricts to each of the quotients $F_c$. We will use the same symbol $\sigma$ for these restricted maps and note that $\sigma$ acts by inversion on $F_c^{ab}$. By analogy with $X$, we define 
\[ X_c = X_c(F) = \{ r \in \Phi(F_c) \mid \sigma(r) = r^{-1} \}. \]

Suppose $G$ is a \ssg and we have $G = F/R$ where $R$ is generated as a normal subgroup by $g$ relators selected from $X$. It is not hard to see that $G_c = F_c/\overline{R}$ where $\overline{R}$ is the normal subgroup generated by the images of the relators under the natural epimorphism from  $F$ to $F_c$ and that these images belong to $X_c$. In fact, there is a converse to this statement which is made precise by the following lemma. 

\begin{lemma}\label{lem-bbh1-2-7}
Let $H$ be a finite $p$-group with $d(H) = g$. 
The following statements are equivalent:
\begin{itemize}
\item[(i)] $H \cong G_c$ for some Schur $\sigma$-group $G$ of generator rank $g$.
\item[(ii)] $H \cong F_c/N$ where $N =  \langle r_1, \ldots, r_g \rangle^{F_c}$ for some $r_1, \ldots, r_g \in X_c$.
\end{itemize}
\end{lemma}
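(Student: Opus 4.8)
The plan is to prove the equivalence by establishing each implication separately, with the bulk of the work residing in the passage between a presentation for the pro-$p$ group $G$ and a presentation for its finite quotient $G_c$. Throughout I would lean on the compatibility of the lower $p$-central series with quotients: since the $P_n$ are characteristic and functorial, the natural epimorphism $\pi\colon F \to F_c = F/P_c(F)$ carries $\sigma$-equivariant data to $\sigma$-equivariant data, and in particular sends $X$ into $X_c$ because $\Phi(F)$ maps onto $\Phi(F_c)$ and $\pi$ commutes with $\sigma$.

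For the implication (i) $\Rightarrow$ (ii), I would start from the structure theorem recalled in the excerpt: a \ssg $G$ of generator rank $g$ admits a $\sigma$-equivariant presentation $G = F/R$ with $R = \langle r_1, \ldots, r_g \rangle^F$ and each $r_i \in X$. I would then pass to the quotient of $p$-class $c$. The key point is that $G_c = G/P_c(G)$ and that, because $R \subseteq \Phi(F)$ and the lower $p$-central series is preserved under the surjection $F \to G$, one has $P_c(G) = P_c(F)R/R$, so $G_c \cong F/(P_c(F)R) \cong F_c/\overline{R}$ where $\overline{R} = \pi(R)$. Since $\pi$ is a surjective homomorphism, $\overline{R}$ is the normal closure in $F_c$ of the images $\overline{r}_i = \pi(r_i)$, and these images lie in $X_c$ by the remarks above. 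Setting $N = \langle \overline{r}_1, \ldots, \overline{r}_g \rangle^{F_c}$ then gives $H \cong F_c/N$ as required, and $d(H) = g$ follows since $N \subseteq \Phi(F_c)$.

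The harder direction is (ii) $\Rightarrow$ (i), where I am handed a presentation of the finite quotient $H \cong F_c/N$ and must manufacture an \emph{infinite-data} \ssg $G$ whose $p$-class $c$ quotient recovers $H$. The natural strategy is to lift: choose preimages $r_i \in X$ of the given $\overline{r}_i \in X_c$ under $\pi$ (possible because $\pi$ maps $X$ onto $X_c$, which itself needs a short argument using that $\pi$ is $\sigma$-equivariant and surjective on Frattini quotients), and set $G = F/R$ with $R = \langle r_1, \ldots, r_g \rangle^F$. I would then need to verify that this $G$ is genuinely a \ssg: condition (iii) holds because $R$ is $\sigma$-invariant (each $r_i \in X$ satisfies $\sigma(r_i) = r_i^{-1}$, so $\sigma$ descends to $G$ and acts by inversion on $G^{ab}$); condition (i), finiteness of $G^{ab}$, and condition (ii), the equality $g = r$ of generator and relation ranks, follow from the fact that $G$ is presented by exactly $g$ relators on $g$ generators with all relators in $\Phi(F)$, so $d(G) = g$ and $G$ has relation rank at most $g$, with a Schur-type argument forcing equality. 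Finally I would check that $G_c \cong H$, which reduces to the identity $\pi(R) = N$; this holds because $\pi$ sends the chosen lifts $r_i$ to the prescribed $\overline{r}_i$ and respects the normal-closure operation.

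I expect the main obstacle to be the surjectivity of $\pi$ restricted to the twisted sets, namely that every element of $X_c$ lifts to an element of $X$ (and more generally that the presentation datum lifts $\sigma$-equivariantly), together with the verification in direction (ii) $\Rightarrow$ (i) that the resulting $G$ has \emph{equal} generator and relation ranks rather than merely an upper bound on the relation rank. The lifting step is where the interplay between the $\sigma$-action and the Frattini structure must be handled carefully: one wants to lift an element fixed-up-to-inversion by $\sigma$ to another such element, which is clean when $p$ is odd because the $(+1)$- and $(-1)$-eigenspaces of $\sigma$ split the relevant $\mathbb{F}_p$-vector spaces, but deserves explicit attention. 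The rank equality is the other delicate point, and I would invoke the standard fact — implicit in the Koch--Venkov framework recalled above — that for a group presented by $g$ relators in $\Phi(F)$ on $g$ generators with finite abelianization, the Schur multiplier constraints force $r = g$.
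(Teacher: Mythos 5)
The paper does not actually reprove this lemma---its ``proof'' is a citation to \cite[Lemma~2.7]{BBH1} together with a remark about the hypotheses---so your proposal is being measured against the argument in that reference rather than against anything written out here. Your direction (i)~$\Rightarrow$~(ii) is essentially correct and matches the sketch the paper gives in the paragraph preceding the lemma: push the Koch--Venkov presentation down the canonical map $\pi\colon F \to F_c$, using that $\pi$ is $\sigma$-equivariant, that $\Phi(F)$ maps onto $\Phi(F_c)$, and that $P_c(G) = P_c(F)R/R$.

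The gap is in (ii)~$\Rightarrow$~(i). You assert that for $G = F/\langle r_1,\ldots,r_g\rangle^F$ with the $r_i$ arbitrary lifts of the given relators, finiteness of $G^{ab}$ ``follows from the fact that $G$ is presented by exactly $g$ relators on $g$ generators with all relators in $\Phi(F)$.'' This is false, and the failure is not a corner case: take all $\overline{r}_i = 1$ (the identity lies in $X_c$), so that $H = F_c$. Your recipe then produces $G = F$, whose abelianization $\mathbb{Z}_p^g$ is infinite, so $G$ is not a Schur $\sigma$-group. The lemma is still true for this $H$ --- for instance $G = F/\langle x_1^{p^c},\ldots,x_g^{p^c}\rangle^F$ works, since $x_i^{p^c} \in X \cap P_c(F)$ maps trivially to $F_c$ --- but the example shows that the relators cannot merely be lifted: they must be modified by elements of $P_c(F)$ (hence invisible in $F_c$, so $G_c \cong H$ is preserved) chosen so that their images in $F^{ab} \cong \mathbb{Z}_p^g$ generate a finite-index subgroup. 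That modification introduces a second subtlety you do not address: $X$ is not closed under multiplication (if $\sigma(r)=r^{-1}$ and $\sigma(s)=s^{-1}$ then $\sigma(rs)=r^{-1}s^{-1}$, which equals $(rs)^{-1}$ only modulo commutators), so after adjusting the lifts one must re-establish membership in $X$ by the same successive-approximation/eigenspace argument you invoke for the surjectivity of $X \to X_c$. Your other flagged point --- that $r(G)=g$ once $G^{ab}$ is finite and $G$ admits a presentation with $g$ relators --- is fine, since $r \geq g$ holds for any finitely presented pro-$p$ group with finite abelianization, but it sits downstream of the finiteness you have not actually secured.
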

See~\cite[Lemma~2.7]{BBH1} for the proof. Note that we have made a minor change to the statement here to emphasize the equivalence.
In the version of this lemma in~\cite{BBH1}, one of the implications is stated with the assumption that $H$ has $p$-class exactly $c$, but this is unnecessary. Only condition~(i) as stated here is being used in the proof that (i) implies (ii).

\section[A third characterization of \textorpdfstring{$G_c$}{G_c}]{A third characterization of $G_c$}
We now give a different characterization of those normal subgroups $N$ of $F_c$ generated by elements chosen from $X_c$.
\begin{lemma}\label{main}
 Let $N$ be a normal subgroup of $F_c$. The following statements are equivalent:
\begin{itemize}
\item[(i)] $N = \langle r_1, \ldots, r_h \rangle^{F_c}$ for some $r_1, \ldots, r_h \in X_c$.
\item[(ii)] $N$ has the following three properties:
\begin{itemize} 
\item[$\bullet$] $\sigma(N) = N$.
\item[$\bullet$]  $\sigma$ induces the inversion map on the quotient $N/N^p[F_c,N]$.
\item[$\bullet$]  $N/N^p[F_c,N]$ has dimension at most $h$ as an $\mathbb{F}_p$-space.
\end{itemize}
\end{itemize}
\end{lemma}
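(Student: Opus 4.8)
The plan is to make the quotient $\overline{N}:=N/N^p[F_c,N]$ the central object. First I would record its structure: since $\sigma(N)=N$, the subgroup $N^p[F_c,N]$ is normal in $F_c$ and $\sigma$-stable, and as it contains both $[N,N]$ and $[F_c,N]$ the quotient $\overline{N}$ is an $\mathbb{F}_p$-vector space with trivial $F_c$-action carrying an induced involution $\sigma$. The Burnside--Nakayama argument for normal generation then supplies the dictionary I will use repeatedly: elements $s_1,\dots,s_k\in N$ generate $N$ as a normal subgroup of $F_c$ if and only if their images span $\overline{N}$, so the minimal number of normal generators of $N$ equals $\dim_{\mathbb{F}_p}\overline{N}$. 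I would also note that, since $X_c\subseteq\Phi(F_c)$, condition~(i) forces $N\subseteq\Phi(F_c)$; reading the equivalence in the presence of this containment (which is automatic in the intended applications, where $N$ is the image of a relation module $R\subseteq\Phi(F)$) we have $N\cap X_c=\{s\in N:\sigma(s)=s^{-1}\}$, and it is this case I treat.

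The implication (i)$\Rightarrow$(ii) is then routine. If $N=\langle r_1,\dots,r_h\rangle^{F_c}$ with $r_i\in X_c$, then $\sigma(N)=\langle r_1^{-1},\dots,r_h^{-1}\rangle^{F_c}=N$, and the $\overline{r_i}$ span $\overline{N}$, giving $\dim\overline{N}\le h$. Because conjugation by $F_c$ is trivial on $\overline{N}$, the $\overline{r_i}$ already span without their conjugates, so the $\mathbb{F}_p$-linear map induced by $\sigma$ is pinned down by $\sigma(\overline{r_i})=\overline{r_i^{\,-1}}=-\overline{r_i}$; hence $\sigma$ acts as inversion on $\overline{N}$.

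For (ii)$\Rightarrow$(i), the dictionary above reduces everything to one lifting statement: every $\sigma$-anti-invariant class in $\overline{N}$ is the image of some $s\in N$ with $\sigma(s)=s^{-1}$ exactly. Granting this, I choose such lifts of a basis of $\overline{N}$; they lie in $N\cap X_c$, normally generate $N$, and I pad with trivial relators $1\in X_c$ (using $\dim\overline{N}\le h$) to obtain $h$ of them. For $p$ odd the lifting is immediate: for any $n\in N$ the element $s:=n\,\sigma(n)^{-1}$ satisfies $\sigma(s)=\sigma(n)n^{-1}=s^{-1}$, and its image in $\overline{N}$ is $\overline{n}-\sigma(\overline{n})=2\,\overline{n}$; as $2$ is invertible modulo $p$, letting $n$ run over lifts of $\tfrac12\overline{v}$ realizes any prescribed anti-invariant $\overline{v}$.

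The main obstacle is the prime $p=2$, where the construction above only produces elements of $N^p[F_c,N]$. Here I would lift by downward induction along the $\sigma$-stable, $F_c$-central filtration $M_0\supseteq M_1\supseteq\cdots$ given by $M_0=N^p[F_c,N]$ and $M_{j+1}=M_j^p[F_c,M_j]$. Starting from any $s$ lifting the target, its error $e:=\sigma(s)s$ lies in $M_0$; replacing $s$ by $sm$ with $m\in M_j$ changes the error modulo $M_{j+1}$ to $e+(\sigma+1)m$ inside the $\mathbb{F}_2[\sigma]$-module $V_j:=M_j/M_{j+1}$, while a commutator estimate (from $[F_c,M_j]\subseteq M_{j+1}$) shows $e$ is already $\sigma$-fixed in $V_j$. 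The step therefore succeeds exactly when the class of $e$ in the Tate group $\widehat{H}^0(\langle\sigma\rangle,V_j)=V_j^{\sigma}/(\sigma+1)V_j$ vanishes. Proving that these obstruction classes can always be cleared, by re-choosing the lift and exploiting that $e=\sigma(s)s$ is a norm together with the $\sigma$-invariance of $N$, is the delicate point; I expect this is precisely the issue the paper isolates and the step on which the argument turns.
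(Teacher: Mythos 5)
Your argument for odd $p$ is correct and essentially identical to the paper's: the paper likewise takes normal generators $s_1,\ldots,s_h$ of $N$ (via the same Frattini/Burnside dictionary on $\overline{N}=N/N^p[F_c,N]$), anti-symmetrizes them to $r_i=s_i^{-1}\sigma(s_i)\in X_c$, and uses that $x\mapsto x^{-2}$ is an automorphism of $\overline{N}$ because $p$ is odd. Your closing paragraph treating $p=2$ as the delicate point is moot — the entire paper (Schur $\sigma$-groups, the examples with $p=3$, and the proof itself, which explicitly invokes ``since $p$ is odd'') is set in odd residue characteristic — while your side remark that (i) implicitly forces $N\subseteq\Phi(F_c)$ (needed so that the constructed relators actually lie in $X_c$) is a fair observation about a hypothesis the paper leaves tacit.
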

\begin{proof}
We first show (i) implies (ii). Suppose $N = \langle r_1, \ldots, r_h \rangle^{F_c}$ for some $r_1, \ldots, r_h \in X_c$. From the definition of $X_c$, we have $\sigma(r_i) = r_i^{-1}$  for $1 \leq i \leq h$, hence $\sigma(N) = N$. Since the elements $r_i$ generate $N$ as a normal subgroup, their images $\overline{r_i}$ will generate the quotient $N/N^p[F_c,N]$ as an $\mathbb{F}_p$-space since it is central in $F/N^p[F_c,N]$.  It follows that the dimension of $N/N^p[F_c,N]$  is at most $h$ as an $\mathbb{F}_p$-space. We also see that the induced automorphism $\sigma$ inverts $\overline{r_i}$ and hence every element of the quotient since it is abelian. This establishes (ii).

For the converse, assume that $N$ satisfies the conditions given in (ii). In particular, we suppose $N/N^p[F_c,N]$ has dimension at most $h$ as an $\mathbb{F}_p$-space. We now show that taking any lift of a spanning set gives elements $s_1, \ldots, s_h \in N$ which generate $N$ as a normal subgroup of $F_c$. Suppose that this were not the case and that $H = \langle s_1, \ldots, s_h \rangle^{F_c}$ is properly contained in $N$. Choose $M$ maximal subject to the conditions that $M \trianglelefteq F_c$ and $H \subseteq M \subsetneq N$. Since $N/M$ must intersect nontrivially with the center of $F_c/N$, we deduce from the maximality of $M$ that $N/M$ is central itself. Maximality can  then be applied again to see that $[N:M] = p$. It follows that $N^p [F_c,N] \subseteq M$ and the images of $s_1, \ldots, s_n$ in $N/N^p[F_c,N]$ must then generate a subgroup contained in $M/N^p[F_c,N] \subsetneq N/N^p[F_c,N]$ contradicting our assumption that they form a spanning set for the latter.

Now set $r_i = s_i^{-1} \sigma(s_i) \in X_c$. We observe that $r_i \in N$ since $\sigma(N) = N$ which implies $\langle r_1, \ldots, r_h \rangle^{F_c} \subseteq N$. To see that equality holds, let $\overline{r_i}$ and $\overline{s_i}$ denote the images of $r_i$ and $s_i$ in $N/N^p[F_c,N]$. Then, $\overline{r_i} = \overline{s_i}^{-1} \sigma(\overline{s_i}) = \overline{s_i}^{-2}$. The map $x \mapsto x^{-2}$ is an automorphism of $N/N^p[F_c,N]$ since $p$ is odd, so the elements $\overline{r_i}$ must also span this space. Our earlier argument, with $r_i$ in place of $s_i$, now shows that $N = \langle r_1, \ldots, r_h \rangle^{F_c}$ as desired.
\end{proof}

Combining Lemma~\ref{lem-bbh1-2-7} and Lemma~\ref{main} (with $h = g$), we get:
\begin{lemma}\label{lem-main}
Let $H$ be a finite $p$-group with $d(H) = g$. 
The following statements are equivalent:
\begin{itemize}
\item[(i)] $H \cong G_c$ for some Schur $\sigma$-group $G$ of generator rank $g$.
\item[(ii)] $H \cong F_c/N$ where $N = \langle r_1, \ldots, r_g \rangle^{F_c}$ for some $r_1, \ldots, r_g \in X_c$.
\item[(iii)] $H \cong F_c/N$ where $N$ is a normal subgroup such that:
\begin{itemize} 
\item[$\bullet$] $\sigma(N) = N$.
\item[$\bullet$]  $\sigma$ induces the inversion map on the quotient $N/N^p[F_c,N]$.
\item[$\bullet$]  $N/N^p[F_c,N]$ has dimension at most $g$ as an $\mathbb{F}_p$-space.
\end{itemize}
\end{itemize}
\end{lemma}

In~\cite{BBH2}, the situation for real quadratic fields $K$ is considered. It is shown in~\cite[Lemma 2.5]{BBH2} that $G_K = \mathrm{Gal}(K^{nr,p}/K)$ is a \spsg which is defined as follows.
\begin{definition}
Let $G$ be a pro-$p$ group with generator rank $g$ and relation rank $r$. We say that $G$ is a \spsg if the following conditions hold:
\begin{itemize}
\item[(i)] $G^{ab}$ is a finite abelian group.
\item[(ii)] $r = g$ or $g + 1$.
\item[(iii)] There exists an automorphism $\sigma: G \rightarrow G$ of order $2$ such that the induced automorphisms
on $G^{ab}$ and $H^2(G,\mathbb{F}_p)$ act by inversion.
\end{itemize}
\end{definition}
As we did for \ssgs above, we can combine \cite[Lemma 2.8]{BBH2} and Lemma~\ref{main} (with $h = g + 1$) to get
\begin{lemma}\label{lem-main-2}
Let $H$ be a finite $p$-group with $d(H) = g$. 
The following statements are equivalent:
\begin{itemize}
\item[(i)] $H \cong G_c$ for some \spsg $G$ of generator rank $g$.
\item[(ii)] $H \cong F_c/N$ where $N = \langle r_1, \ldots, r_{g+1} \rangle^{F_c}$ for some $r_1, \ldots, r_{g+1} \in X_c$.
\item[(iii)] $H \cong F_c/N$ where $N$ is a normal subgroup such that:
\begin{itemize} 
\item[$\bullet$] $\sigma(N) = N$.
\item[$\bullet$]  $\sigma$ induces the inversion map on the quotient $N/N^p[F_c,N]$.
\item[$\bullet$]  $N/N^p[F_c,N]$ has dimension at most $g + 1$ as an $\mathbb{F}_p$-space.
\end{itemize}
\end{itemize}
\end{lemma}

In~\cite{BBH1,BBH2}, the quotients $G_c$ are referred to as \ssags or \spsags of $G$ depending on whether $G$ is a \ssg or \spsg respectively. For the remainder of this paper, we will simply refer to such quotients as {\em ancestor groups}. 

\section{Computing ancestor groups}

In~\cite{BBH1,BBH2}, various computations to enumerate ancestor groups $G_c$
were carried using the $p$-group generation algorithm~\cite{O}. This algorithm takes a finite $p$-group $P$ of $p$-class~$c$ and returns all of the finite $p$-groups $Q$ with $p$-class $c+1$ and $Q_c \cong P$. Such groups are called the {\em immediate descendants of $P$}. 

Starting from the elementary abelian $p$-group with generator rank~$g$, one can apply the algorithm recursively to compute all of the finite $p$-groups with generator rank~$g$ until reaching some desired bound on the $p$-class. It can be helpful to picture this process in terms of a tree where groups with the same $p$-class are placed on the same level and edges connect each group to its immediate descendants on the next level.

In practice, such computations rapidly run into storage issues if one attempts to enumerate all $g$-generated finite $p$-groups without further constraint. However, the finite quotients $G_c$ of a \ssg or \spsg $G$ inherit certain properties from $G$ which can be used to narrow the search. In particular, two criteria play an important role in the computations in~\cite{BBH1, BBH2}.

First, the automorphism $\sigma$ on $G$ restricts to give an automorphism of $G_c$ of order $2$ which acts by inversion on its abelianization. It follows that any finite $p$-group not possessing such an automorphism, along with all of its descendants, can be eliminated in our search for ancestor groups.

Second, the relation rank $r(G)$ can be used to bound another quantity associated to $G_c$ which we now introduce. Suppose that $G_c = F/R$ for some normal subgroup $R$ of $F$. Observe that $P_c(F) \subseteq R$ since $P_c(G_c) = 1$. The {\em $p$-multiplicator of $G_c$} is the quotient $R/R^*$ where $R^* = R^p[F,R]$. The {\em $p$-covering group} is the quotient $F/R^*$ and the {\em nucleus of $G_c$} is the subgroup $P_c(F/R^*) = P_c(F)R^*/R^*$ which is contained in the $p$-multiplicator since $R$ contains both $P_c(F)$ and $R^*$.
These quantities are introduced in~\cite{O} and shown to be independent of the choice of $R$ up to isomorphism. They play an important role in the $p$-group generation algorithm. We note that~\cite{O} works with abstract presentations rather than pro-$p$ presentations as we do here, but these quantities still coincide. See~\cite[Remark 2.4]{BBH1} for more details on this point.

The $p$-multiplicator and nucleus are elementary abelian $p$-groups which means their generator ranks coincide with their dimensions as $\mathbb{F}_p$-spaces. We define $h(G_c) = \dim_{\mathbb{F}_p} R/R^* -  \dim_{\mathbb{F}_p} P_c(F)R^*/R^* =  \dim_{\mathbb{F}_p} R/P_c(F) R^*$. By~\cite[Proposition 2]{BN}, we have $h(G_c) \leq r(G)$. In particular, if $G$ is a \ssg with $r(G) = g$, then $h(G_c) \leq g$ for all $c \geq 1$. Similarly, if $G$ is a \spsg with $r(G) = g + 1$, then $h(G_c) \leq g+1$ for all $c \geq 1$. It follows that if a finite $p$-group violates one of these inequalities, then it and its descendants can be eliminated from our search for ancestor groups of the associated type.

Although the conditions above eliminate a lot of groups when searching for ancestor groups of fixed generator rank $g$, there are groups which satisfy these conditions which are not such quotients. For example, when $g$ and $c$ are very small, one can find all possible $g$-generated \ssags of $p$-class $c$ by simply enumerating over tuples of relations $r_1, \ldots, r_g \in X_c$ and keeping the quotients $F_c/\langle r_1, \ldots, r_g\rangle^{F_c}$ which have $p$-class equal to $c$. If $p = 3$, $g = 2$ and $c = 2$, then one finds there are exactly $3$ such groups of $3$-class $2$. These are described in more detail in~\cite[Example~2.9]{BBH1}. Note that the subscript notation used to label these groups in~\cite{BBH1} is not consistent with our use of the notation $G_c$ in this paper. 

On the other hand, if one uses the $p$-group generation algorithm, then starting from the elementary abelian group $\mathbb{Z}/3 \times \mathbb{Z}/3$, one finds that there are $7$ immediate descendants of $3$-class $2$. Of these, $5$ satisfy the conditions above. These include the $3$ groups mentioned in the previous paragraph, but also $2$ additional groups (the abelian groups $\mathbb{Z}/3 \times \mathbb{Z}/9$ and
$\mathbb{Z}/9 \times \mathbb{Z}/9$). The direct enumeration implies immediately that the latter groups cannot be \ssags even though they satisfy the conditions. 

In response to these and other examples, the following terminology was introduced at the end of~\cite[Section~2.4]{BBH1}.
\begin{definition}\label{def-pseudo}
A finite $p$-group $P$ of generator rank $g$ and $p$-class $c$ is said to be a {\em \pssagnsp} if the following conditions hold:
\begin{itemize}
\item[(i)] There exists an automorphism $\sigma: P \rightarrow P$ of order $2$ such that the induced automorphism 
$\sigma: P^{ab} \rightarrow P^{ab}$ is the inversion map $\overline{x} \mapsto \overline{x}^{-1}$.
\item[(ii)] $h(P) \leq g$.
\item[(iii)] $P \ncong G_c$ for any \ssg $G$.
\end{itemize}
\end{definition}
Since enumerating over tuples of relations rapidly becomes infeasible as the parameters increase (see Remark~\ref{remark-enum} below), it would be nice to have a different characterization of such groups. We do this now using part~(iii) of Lemma~\ref{lem-main}.
\begin{theorem}\label{thm-main-cor}
Let $P$ be a finite $p$-group of generator rank $g$ and $p$-class $c$. Then $P$ is a \pssag if and only if:
\begin{itemize} 
\item[(a)] There exists a normal subgroup $N$ of $F_c$ such that $P \cong F_c/N$ and:
\begin{itemize}
\item[(i)] $\sigma(N) = N$.
\item[(ii)] $N/N^p[F_c,N]$ has dimension at most $g$ as an $\mathbb{F}_p$-space.
\end{itemize}
\item[(b)] For all $N$ satisfying the conditions in part~(a), the induced action of $\sigma$ on $N/N^p[F_c,N]$ is \underline{not} inversion.
\end{itemize}
\end{theorem}

\begin{proof}
As noted earlier, if $P$ satisfies~(i) in Definition~\ref{def-pseudo}, then we can find a normal subgroup $R$ of $F$ such that $F/R \cong P$ and $\sigma(R) = R$. We then have $P_c(F) \subseteq R$ since $P$ has $p$-class~$c$. If we let $N = R/P_c(F)$, then $F_c/N \cong P$ and $\sigma(N) = N$ for the induced automorphism $\sigma$. Conversely, given such an $N$, one can see that automorphism induced by $\sigma$ on the quotient $P = F_c/N$ has the desired properties.

With $R$ and $N$ as above, we also have $h(P) =   \dim_{\mathbb{F}_p} R/P_c(F) R^* = \dim_{\mathbb{F}_p} N/N^p [F_c,N]$ since
\[  R/P_c(F) R^* \cong \frac{R/P_c(F)}{P_c(F) R^*/P_c(F)} = N/N^p [F_c,N]. \]
The last equality follows since $N^p [F_c,N]$ is the image of $R^* = R^p[F,R]$ under the natural epimorphism from $F$ to $F_c$.
Hence $h(P) \leq g$ if and only if $N/N^p[F_c,N]$ has dimension at most $g$ as an $\mathbb{F}_p$-space. We have thus shown that conditions~(i) and (ii) in Definition~\ref{def-pseudo} are equivalent to conditions~(i) and~(ii) in~(a).

To finish, observe that if $P$ satisfies~(a), then it will satisfy~(b) if and only if $P \ncong G_c$ for any \ssg $G$ or else we would contradict Lemma~\ref{lem-main}. The result follows.
\end{proof}
\begin{remark}
A similar definition and result for \spsgs simply involves replacing the upper bounds on $h(P)$ and the dimension of 
$N/N^p[F_c,N]$ with $g + 1$ instead of $g$.
\end{remark}

We now briefly outline how one can use Theorem~\ref{thm-main-cor} to test whether a $p$-group is an ancestor group during a recursive search for \ssags with the $p$-group generation algorithm. Algorithms for working with groups (including $p$-groups and their  automorphisms) are discussed in more detail in~\cite{HEO}. Many have been implemented in computer algebra systems such as Magma~\cite{magma} and GAP~\cite{gap}. 

Given a finite $p$-group $G$ of generator rank $g$ and $p$-class $c$ that one wants to test, one first checks for the existence of a generator inverting automorphism $\sigma$ on $G$ and also whether $h(G) \leq g$. If either of these conditions fails to hold, then the group and its descendants are not \ssagsnsp. If it passes these tests, then it might potentially be a \pssag as defined above. 

One can enumerate over the normal subgroups $N$ in Theorem~\ref{thm-main-cor} by considering kernels of $\sigma$-equivariant epimorphisms from $F_c$ onto $G$. These homomorphisms naturally correspond to generating tuples for $G$ selected from $X(G)$ since they are induced by $\sigma$-equivariant epimorphisms from the free group $F$ onto $G$ which factor through $F_c$ since $G$ has $p$-class $c$. 
One notes that homomorphisms $F_c \rightarrow G$ have the same kernel if and only if they differ by an automorphism of $G$, so technically it is only necessary to consider generating tuples for $G$ that lie in distinct orbits under the action by the automorphism group. Further, a tuple generates $G$ if and only if its image generates the Frattini quotient $G/\Phi(G)$ which can be tested quickly.

Given each normal subgroup $N$, one finally checks whether the $\sigma$-automorphism for $F_c$ induces an action by inversion on $N/N^*$. This can be done by testing membership $n_i \sigma(n_i) \in N^*$ given a generating set  $\{ n_i \}_{i=1}^{h(G)}$ for $N$. If action by inversion is found, then we know the group $G$ must be a \ssag by Lemma~\ref{lem-main}. If no such action is found for all $N$, then we conclude the group is a \pssag by Lemma~\ref{lem-main} and it and its descendants can be eliminated from the search. 

\begin{remark}\label{remark-enum}
A more direct approach to checking if a given $g$-generated finite $p$-group $G$ is an ancestor group of $p$-class $c$ would be to compute $X_c$ and then enumerate over  relators $r_1, \ldots, r_g \in X_c$, checking to see if $G \cong F_c / \langle r_1, \ldots, r_g \rangle^{F_c}$. To illustrate how quickly this becomes infeasible as $c$ increases, consider Table~\ref{table1} showing the size of $X_c^g$  for small $c$ and $g$ when $p = 3$.  We have not attempted to compute further entries given how quickly they are increasing. \\[2 pt]
\end{remark}

\begin{table}
\caption{Sizes of $F_c$ and $X_c^g$ for various $g$ and $c$ (with $p = 3$).}
\label{table1}
\begin{tabular}{|c|c|c|c|}
\hline
$g$ & $c$ & $|F_c|$ &  $|X_c|^g$ \\[1 pt]
\hline 
$2$ & $2$ & $3^5$ & $3^4$ \\
$2$ & $3$ & $3^{10}$ & $3^{12}$  \\
$2$ & $4$ & $3^{18}$ & $3^{20}$ \\
$2$ & $5$ & $3^{32}$ & $3^{40}$ \\
\hline
$3$ & $2$ & $3^9$ & $3^9$ \\
$3$ & $3$ & $3^{23}$ & $3^{42}$ \\
\hline
\end{tabular}
\end{table}

\end{document}